\documentclass[11pt]{amsart}

\usepackage{amsmath}
\usepackage{amssymb}
\usepackage{amscd}
\usepackage{color}

\begin{document}

\title{On Cline's Formula for some certain elements in a ring}

\author{Orhan G\"{u}rg\"{u}n}
\address{Orhan G\"{u}rg\"{u}n, Department of Mathematics, Ankara University, Turkey}
\email{orhangurgun@gmail.com}

\date{\empty}
\date{}
\newtheorem{thm}{Theorem}[section]
\newtheorem{lem}[thm]{Lemma}
\newtheorem{prop}[thm]{Proposition}
\newtheorem{cor}[thm]{Corollary}
\newtheorem{exs}[thm]{Examples}
\newtheorem{defn}[thm]{Definition}
\newtheorem{nota}{Notation}
\newtheorem{rem}[thm]{Remark}
\newtheorem{ex}[thm]{Example}

\maketitle

\begin{abstract} In \cite{C, LCC}, it is proven that
if an element $ab$ in a ring is (generalized) Drazin invertible,
then so is $ba$.  In this paper,  we give a new and short proof of
it in an effective manner. In particular, we show that if $ab$ is
strongly clean, then so is $ba$. Consequently, we see that if
$1-ab$ is strongly clean, then so is $1-ba$. Also, some
characterizations are obtained for some certain elements in a
corner ring. It is shown that for an idempotent $e$ and any
arbitrary element $a$ in a ring, $ea+1-e$ is Drazin invertible if
and only if $eae$ is Drazin invertible.
 \\[+2mm]
{\bf Keywords:} Strongly clean element, strongly $\pi$-regular
element, quasipolar element, pseudopolar element, Cline's formula,
corner ring.
\thanks{ \\{\bf 2010 Mathematics Subject Classification:} 15A09, 16S10}
\end{abstract}

\section{Introduction}

Throughout this paper, all rings are associative with identity.
The notation $U(R)$ denotes the group of units of $R$, $J(R)$
denotes the Jacobson radical of $R$ and $Nil(R)$ denotes the set
of all nilpotent elements of $R$. The \emph{commutant} and
\emph{double commutant} of an element $a$ in a ring $R$ are
defined by $comm_R(a)=\{x\in R~|~xa=ax\}$, $comm_R^2(a)=\{x\in
R~|~xy=yx~\mbox{for all}~y\in comm_R(a)\}$, respectively. We
simply use $comm(a)$ and $comm^2(a)$. If $R^{qnil}=\{a\in
R~|~1+ax\in U(R)~\mbox{for every}~x\in comm(a)\}$ and $a\in
R^{qnil}$,  then $a$ is said to be \emph{quasinilpotent} \cite{H}.
Set $J^{\#}(R)=\{ x\in R~\mid~\exists ~n\in {\Bbb N}~\mbox{such
that}~x^n\in J(R)\}$. Clearly, $J(R)\subseteq J^{\#}(R) \subseteq
R^{qnil}$.

Now we recall some definitions that are used in the next sections.
Following \cite{H}, an element $a\in R$ is called
\emph{quasipolar} provided that there exists $p^2=p\in comm^2(a)$
such that $a+p\in U(R)$ and $ap\in R^{qnil}$. Following
\cite{WC3}, an element $a\in R$ is called \emph{pseudopolar}
provided that there exists $p^2=p\in comm^2(a)$ such that $a+p\in
U(R)$ and $ap\in J^{\#}(R)$. An element $a\in R$ is said to be
\emph{strongly $\pi$-regular} if $a^n\in a^{n+1}R\cap Ra^{n+1}$
for some $n\in \Bbb N$. An element $a\in R$ is said to be
\emph{strongly clean} if there exists $e^2=e\in R$ such that
$a-e\in U(R)$ and $e\in comm(a)$ (\cite{N2}). An element $a$ of
$R$ is \emph{(respectively, pseudo, generalized) Drazin
invertible} (\cite{KP}) \cite{D, K} in case there is an element
$b\in R$ satisfying $ab^2 = b$, $b\in comm^2(a)$ and
(respectively, $a^2b-a\in J^{\#}(R)$, $a^2b-a\in R^{qnil}$)
$a^2b-a\in Nil(R)$. Such $b$, if it exists, is unique; it is
called a \emph{(respectively, pseudo, generalized) Drazin inverse}
of $a$ and will be denoted by (respectively, $a^{pD}$, $a^{gD}$)
$a^D$. Further, an element $a\in R$ is said to be \emph{group
invertible (or strongly regular)} if there exists $b\in R$ such
that $b\in comm(a)$(or equivalently, $b\in comm^2(a)$), $ab^2=b$
and $a^2b=a$. In this case, we say that $b$ is group inverse of
$a$ and will be denoted by $a^{\sharp}$.

Koliha showed that $a\in R$ is Drazin invertible if and only if
$a$ is strongly $\pi$-regular (\cite[Lemma 2.1]{K}). Koliha and
Patricio proved that $a\in R$ is generalized Drazin invertible if
and only if $a$ is quasipolar (\cite[Theorem 4.2]{KP}). Wang and
Chen showed that $a\in R$ is pseudo Drazin invertible if and only
if $a$ is pseudopolar (\cite[Theorem 3.2]{WC3}).

Cline \cite{C} showed that if $ab$ is Drazin invertible, then so
is $ba$, and $(ba)^D=b\big((ab)^D\big)^2a$. This equality is known
as \emph{Cline's formula}. Recently, Liao et al. \cite{LCC} showed
that if $ab$ is generalized Drazin invertible, then so is $ba$.
Also, Wang and Chen \cite{WC3} proved that if $ab$ is pseudo
Drazin invertible, then so is $ba$. In this note, we present a new
and short proof of these facts. We also see that if $ab$ is
strongly clean, then so is $ba$. By using this result, we derive
that if $1-ab$ is strongly clean, then so is $1-ba$. In Section
$3$, elements in a corner ring are studied. For example, we show
that $eae$ is strongly $\pi$-regular in $eRe$ if and only if
$ea+1-e$ is strongly $\pi$-regular if and only if $ae+1-e$ is
strongly $\pi$-regular. Moreover, we also see that if $1-ab$ is
pseudo Drazin invertible, then so is $1-ba$.

The following remarks summarize some results in \cite{G} so that
they can be easily referenced in this paper.

\begin{rem}\cite[Theorem 2.12]{G} \label{new1} Let $R$ be a ring with $a\in R$. Then the following are equivalent.

\begin{enumerate}
   \item $a$ is strongly $\pi$-regular.
   \item $a=s+n$ where $s$ is strongly regular, $n$ is nilpotent and $sn=ns=0$.
\end{enumerate}
\end{rem}

\begin{rem}\cite[Corollary 2.15]{G}\label{new2} Let $R$ be a ring with $a\in R$. Then the following are equivalent.

\begin{enumerate}
    \item $a$ is quasipolar.
    \item $a=s+q$ where $s$ is strongly regular, $s\in comm^2(a)$, $q\in R^{qnil}$ and $sq=qs=0$.
\end{enumerate}
\end{rem}

\begin{rem}\cite[Theorem 4.1]{G}\label{new3} Let $R$ be a ring with $a\in R$. Then the following are equivalent.

\begin{enumerate}
    \item $a$ is pseudopolar.
    \item $a=s+q$ where $s$ is strongly regular, $s\in comm^2(a)$, $q\in J^{\#}(R)$ and $sq=qs=0$.
\end{enumerate}
\end{rem}

\section{ Main Results }

We start with several auxiliary lemmas which are crucial in the
proof of the main results.

\begin{lem}\label{Jacobson}(Jacobson's Lemma) Let $R$ be a ring and $a,b\in R$. If
$1+ab$ is invertible, then so is $1+ba$ and
$(1+ba)^{-1}=1-b(1+ab)^{-1}a$.
\end{lem}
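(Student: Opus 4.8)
The plan is to verify directly that the element $1 - b(1+ab)^{-1}a$ proposed in the statement is a two-sided inverse of $1+ba$; since the formula for the candidate is already handed to us, no guesswork about its form is required, and the proof reduces to a bookkeeping computation. Writing $u = (1+ab)^{-1}$ for brevity, the entire argument rests on the two defining identities $u(1+ab)=1$ and $(1+ab)u=1$, which I will rewrite in the equivalent useful forms $u+uab=1$ and $u+abu=1$, that is, $uab = abu = 1-u$.

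First I would compute the product $(1+ba)(1-bua)$. Expanding yields $1 + ba - bua - babua$, and the key move is to collect the last two terms as $-b(u+abu)a$. Substituting the identity $u+abu=1$ collapses this to $-ba$, which cancels the $+ba$ term and leaves exactly $1$. The computation of the reverse product $(1-bua)(1+ba)$ is entirely symmetric: expanding and grouping the cross terms produces $-b(u+uab)a$, and the identity $u+uab=1$ again forces the cancellation down to $1$. With both one-sided products equal to $1$, I conclude that $1+ba$ is invertible and that its inverse is the stated expression.

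There is essentially no serious obstacle beyond keeping the noncommutative bookkeeping straight: every simplification uses only associativity together with the two inverse identities for $u$, and at no step is any commutation of $a$ and $b$ invoked. The one point demanding minor care is recognizing which grouping — $u+abu$ on the left-hand product versus $u+uab$ on the right-hand product — is needed, since these are genuinely distinct statements in a noncommutative ring. This lemma will then serve as the base case from which the Cline-type transfer results relating the invertibility-theoretic properties of $ab$ and $ba$ are bootstrapped in the sequel.
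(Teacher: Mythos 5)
Your verification is correct: both products $(1+ba)(1-bua)$ and $(1-bua)(1+ba)$ do collapse to $1$ using only the identities $uab=abu=1-u$, with the two distinct groupings handled properly. The paper states this lemma without proof (it is the classical Jacobson's Lemma), so there is nothing to compare against; your direct two-sided check is the standard and complete argument.
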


\begin{lem}\label{lem1} Let $R$ be a ring and $a\in R$. If $a=s+n$ where $s$ is strongly regular, $n$ is nilpotent and
$sn=ns=0$, then $a^{D}=s^{\sharp}$.
\end{lem}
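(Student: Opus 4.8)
The plan is to verify that $s^{\sharp}$ satisfies the three defining conditions of the Drazin inverse of $a$, namely $a(s^{\sharp})^2 = s^{\sharp}$, $s^{\sharp} \in comm^2(a)$, and $a^2 s^{\sharp} - a \in Nil(R)$. Since the Drazin inverse is unique, establishing these three identities will immediately give $a^D = s^{\sharp}$. The hypotheses give me $a = s+n$ with $s$ strongly regular, $n$ nilpotent, and the orthogonality relations $sn = ns = 0$; these orthogonality relations will be the workhorse that kills the $n$-contributions in every computation.

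First I would record the basic facts about $s^{\sharp}$: since $s$ is strongly regular, $s^{\sharp}$ exists and satisfies $s(s^{\sharp})^2 = s^{\sharp}$, $s^2 s^{\sharp} = s$, and $s^{\sharp} \in comm(s)$. A useful preliminary step is to observe that because $sn = ns = 0$, we get $s^{\sharp} n = n s^{\sharp} = 0$ as well; this follows since $s^{\sharp} = s(s^{\sharp})^2$, so $s^{\sharp} n = s(s^{\sharp})^2 n$ and I can push the $n$ past using $sn=0$ after rewriting $s^{\sharp}$ in terms of $s$. For the first condition, I compute $a(s^{\sharp})^2 = (s+n)(s^{\sharp})^2 = s(s^{\sharp})^2 + n(s^{\sharp})^2 = s^{\sharp} + 0 = s^{\sharp}$, using $n s^{\sharp} = 0$. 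For the third condition, I compute $a^2 s^{\sharp} - a = (s+n)^2 s^{\sharp} - (s+n)$; expanding $(s+n)^2 = s^2 + sn + ns + n^2 = s^2 + n^2$ by orthogonality, and then $a^2 s^{\sharp} = (s^2 + n^2)s^{\sharp} = s^2 s^{\sharp} + n^2 s^{\sharp} = s + 0 = s$, so $a^2 s^{\sharp} - a = s - (s+n) = -n$, which is nilpotent.

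The step I expect to require the most care is the double-commutant condition $s^{\sharp} \in comm^2(a)$. I must show that $s^{\sharp}$ commutes with every $x$ that commutes with $a = s+n$. The natural approach is to argue that $s \in comm^2(a)$ and $n \in comm^2(a)$, from which $s^{\sharp} \in comm^2(a)$ would follow since $s^{\sharp}$ is a limit/polynomial-type expression tied to $s$; more precisely, a standard fact is that $s^{\sharp} \in comm^2(s)$, so it suffices to show $comm(a) \subseteq comm(s)$. Given $xa = ax$, I would like to conclude $xs = sx$. This is where the decomposition $a = s+n$ with the orthogonality and the double-commutant properties from Remark~\ref{new1} (or its underlying construction) come into play: in the strongly $\pi$-regular decomposition, the idempotent $ss^{\sharp}$ and hence $s$ itself lie in $comm^2(a)$, so any $x \in comm(a)$ commutes with $s$ and therefore with $s^{\sharp}$. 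I would invoke that $s$ (being built from powers of $a$ in the strong $\pi$-regularity construction) lies in $comm^2(a)$ to close this final gap, after which all three Drazin conditions are verified and uniqueness yields $a^D = s^{\sharp}$.
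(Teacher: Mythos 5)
Your verification of the two computational conditions matches the paper's proof exactly: the paper sets $y=s^{\sharp}$ and computes $ay^2=y$ and $a^2y-a=-n\in Nil(R)$ using $sn=ns=0$, just as you do. The only real difference is how the commutation condition is handled, and that is where your argument has a soft spot. You correctly reduce $s^{\sharp}\in comm^2(a)$ to showing $comm(a)\subseteq comm(s)$, but you then justify $s\in comm^2(a)$ by appealing to ``$s$ being built from powers of $a$ in the strong $\pi$-regularity construction.'' That is not available here: the lemma's hypothesis hands you an \emph{arbitrary} strongly regular $s$ with $a=s+n$, $n$ nilpotent and $sn=ns=0$; nothing says $s$ was obtained from powers of $a$. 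The claim is nevertheless true and can be proved from the hypotheses alone: pick $k$ with $n^k=0$; then $a^k=(s+n)^k=s^k$ by orthogonality, so any $x\in comm(a)$ commutes with $s^k$; since $s^k$ is group invertible with group inverse $(s^{\sharp})^k$ and group inverses lie in the double commutant, $x$ commutes with $(s^{\sharp})^k$, hence with the idempotent $ss^{\sharp}=a^k(s^{\sharp})^k$, hence with $s=(ss^{\sharp})a$ (using $s^{\sharp}n=0$), and finally with $s^{\sharp}\in comm^2(s)$. With that patch your proof is complete.

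For comparison, the paper does not verify the double-commutant condition at all: it checks only $y\in comm(a)$ together with $ay^2=y$ and $a^2y-a\in Nil(R)$, implicitly relying on Drazin's classical theorem (equivalently, \cite[Lemma 2.1]{K}, already quoted in the introduction) that in the nilpotent case these conditions force $y\in comm^2(a)$ automatically. So you are being more scrupulous than the paper on the one condition it glosses over; you just need to replace the appeal to a ``construction'' with the $a^k=s^k$ argument above (or with the same citation the paper leans on).
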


\begin{proof} Write $s^{\sharp}:=y$. Then we have $y\in comm^2(s)$,
$sy^2=y$ and $ys^2=s$. Clearly, $y\in comm(a)$. Note that
$ay^2=as^2y^4=s^3y^4=y$ and $a^2y-a=a^2y^2s-a=s^3y^2-a=s-a=-n\in
Nil(R)$. This implies that $a^{D}=s^{\sharp}$.
\end{proof}

\begin{lem}\label{lem2} Let $R$ be a ring and $a\in R$. If $a=s+q$ where $s$ is strongly regular,
$s\in comm^2(a)$, $q\in R^{qnil}$ and $sq=qs=0$, then
$a^{gD}=s^{\sharp}$.
\end{lem}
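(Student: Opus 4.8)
The plan is to show that $y:=s^{\sharp}$ is the generalized Drazin inverse of $a$, in exact parallel with Lemma~\ref{lem1} but with the role of the nilpotent summand now played by the quasinilpotent $q$. Recall that $y=s^{\sharp}$ satisfies $y\in comm^2(s)$, $sy^2=y$ and $s^2y=s$, so that $e:=sy=ys$ is idempotent. By the definition of the generalized Drazin inverse I must verify three things: $ay^2=y$, $y\in comm^2(a)$, and $a^2y-a\in R^{qnil}$.

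First I would record the elementary consequences of $sq=qs=0$. Since $q$ commutes with $s$, it lies in $comm(s)$, and because $y\in comm^2(s)$ this forces $yq=qy$. Multiplying $y=sy^2$ on the left by $q$ and using $qs=0$ gives $qy=(qs)y^2=0$, whence also $yq=0$, $qy^2=(qy)y=0$ and $q^2y=q(qy)=0$. Together with $sq=qs=0$ these identities let me expand $a=s+q$ with every cross term vanishing; in particular $a^2=s^2+q^2$.

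With this in hand the two algebraic identities are immediate, mirroring Lemma~\ref{lem1}: $ay^2=sy^2+qy^2=y+0=y$, and $a^2y=s^2y+q^2y=s+0=s$, so that $a^2y-a=s-(s+q)=-q$. Since $R^{qnil}$ is closed under negation (for $x\in comm(-q)=comm(q)$ one has $1+(-q)x=1+q(-x)\in U(R)$, because $-x\in comm(q)$), I obtain $a^2y-a=-q\in R^{qnil}$, which is precisely the generalized Drazin condition.

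The one genuinely new point compared with Lemma~\ref{lem1} is the double-commutant requirement $y\in comm^2(a)$, and this is where the hypothesis $s\in comm^2(a)$ is essential. Given any $x\in comm(a)$, the assumption $s\in comm^2(a)$ yields $xs=sx$, i.e.\ $x\in comm(s)$; then $y\in comm^2(s)$ gives $xy=yx$. Hence $y\in comm^2(a)$, and combined with the two identities above this shows $a^{gD}=y=s^{\sharp}$. I expect no serious obstacle here: the computations are routine, and the only care needed is to route the double-commutant claim through $comm^2(s)$ rather than trying to commute $y$ with $a$ directly.
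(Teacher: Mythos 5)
Your proof is correct and follows essentially the same route as the paper, which simply verifies the defining identities $ay^2=y$ and $a^2y-a=-q\in R^{qnil}$ by direct computation (the paper's Lemma~\ref{lem2} just defers to the computation in Lemma~\ref{lem1}). If anything, you are more careful than the paper: you explicitly check the double-commutant condition $y\in comm^2(a)$ by routing through $s\in comm^2(a)$ and $y\in comm^2(s)$, a step the paper leaves implicit.
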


\begin{proof} Similar to the proof of Lemma~\ref{lem1}.
\end{proof}

\begin{lem}\label{lem3} Let $R$ be a ring and $a\in R$. If $a=s+q$ where $s$ is strongly regular,
$s\in comm^2(a)$, $q\in J^{\#}(R)$ and $sq=qs=0$, then
$a^{pD}=s^{\sharp}$.
\end{lem}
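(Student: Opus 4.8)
The plan is to mirror the proof of Lemma 1 almost verbatim, since Lemma 3 has the identical structure: we are given a decomposition $a = s + q$ with $s$ strongly regular, $s \in comm^2(a)$, $qs = sq = 0$, and the only difference from Lemma 1 is that $q \in J^{\#}(R)$ rather than $q \in Nil(R)$. Accordingly, I would again set $y := s^{\sharp}$ and verify the three defining conditions for $a^{pD} = y$, namely $ay^2 = y$, $y \in comm^2(a)$, and $a^2 y - a \in J^{\#}(R)$.

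First I would record the group-inverse relations $y \in comm^2(s)$, $sy^2 = y$, and $ys^2 = s$, together with $yq = qy = 0$. The vanishing $yq = qy = 0$ follows because $y \in comm^2(s)$ and $q$ commutes with $s$ (since $sq = qs$ makes $q \in comm(s)$), so $y$ commutes with $q$; then $yq = y s^2 y^2 q = y s^2 y (yq)$ and using $sq = 0$ one gets $yq = 0$, and symmetrically $qy = 0$. With these in hand, the computations $ay^2 = (s+q)y^2 = sy^2 = y$ and $y \in comm(a)$ go through exactly as before, and the double-commutant condition $y \in comm^2(a)$ can be argued from $y = s^{\sharp} \in comm^2(s)$ together with the fact that anything commuting with $a$ commutes with $s$ (using $s \in comm^2(a)$).

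The one genuinely new point is the last condition: I would compute $a^2 y - a = a^2 y^2 s - a = s^3 y^2 - a = s - a = -q$, using $a^2 y^2 s = s^3 y^2$ (which follows from $ay = sy$ and $sy^2 s = s$). Since $q \in J^{\#}(R)$ by hypothesis, we conclude $a^2 y - a = -q \in J^{\#}(R)$, which is precisely the pseudo-Drazin defining condition. This is the step that distinguishes Lemma 3 from Lemma 1, but it is no harder: one simply invokes the hypothesis on $q$ at the end instead of nilpotence.

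I expect no serious obstacle; the proof is a direct analogue of Lemma 1, and indeed the author will almost certainly write \textbf{``Similar to the proof of Lemma~\ref{lem1}''} just as was done for Lemma 2. If any care is needed, it is only in confirming that $J^{\#}(R)$ is the correct target set so that the definition of pseudo Drazin invertibility (requiring $a^2 b - a \in J^{\#}(R)$) is literally matched; since $-q \in J^{\#}(R)$ whenever $q \in J^{\#}(R)$ (as $J^{\#}(R)$ is closed under negation, $(-q)^n = \pm q^n \in J(R)$), the verification is immediate.
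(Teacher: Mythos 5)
Your proposal is correct and matches the paper's approach exactly: the paper's proof of Lemma~\ref{lem3} is literally ``The proof is similar to the proof of Lemma~\ref{lem1},'' i.e.\ set $y=s^{\sharp}$, verify $ay^2=y$ and $a^2y-a=-q$, and conclude from $q\in J^{\#}(R)$. Your additional care with $yq=qy=0$ and with $y\in comm^2(a)$ (via $s\in comm^2(a)$ and $y\in comm^2(s)$) only makes explicit what the paper leaves implicit.
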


\begin{proof} The proof is similar to the proof of Lemma~\ref{lem1}.
\end{proof}

We now prove

\begin{thm} \label{strclean} Let $R$ be a ring and $a,b\in R$. If
$ab$ is strongly clean, then so is $ba$.
\end{thm}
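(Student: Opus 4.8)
The plan is to convert strong cleanness of $ab$ into the decomposition $ab=e+u$, where $e^{2}=e\in comm(ab)$ and $u:=ab-e\in U(R)$, and then to produce an explicit idempotent together with a unit witnessing that $ba$ is strongly clean. The first observation is that $e$ automatically commutes with $u$, since $eu=e(ab)-e=(ab)e-e=ue$; hence $e$ commutes with $u^{-1}$ as well. This lets me introduce the auxiliary element $c:=(1-e)u^{-1}=u^{-1}(1-e)$, which plays the role of a partial inverse of $ab$ on its invertible part: a short computation gives $c(ab)=(ab)c=1-e$ and $c(ab)c=c$.

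Guided by Cline's formula, I would then take as candidate idempotent for $ba$ the element
\[
f:=1-bca .
\]
There are three things to verify. First, $f^{2}=f$: expanding $f^{2}=1-2bca+bc(ab)ca$ and using $c(ab)c=c$ collapses the quadratic term, giving $f^{2}=1-bca=f$. Second, $f\in comm(ba)$: the identity $f(ba)=(ba)f$ reduces to $bc(ab)a=b(ab)ca$, which holds because $c(ab)=(ab)c$ (both sides equal $b(1-e)a$). Third, and this is the crux, one must show $ba-f\in U(R)$.

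For the last point I would rewrite $ba-f=-\bigl(1-b(1+c)a\bigr)$ and invoke Jacobson's Lemma (Lemma~\ref{Jacobson}): the element $1-b(1+c)a$ is a unit precisely when $1-(1+c)(ab)$ is a unit, and the latter factor telescopes, since $(1+c)(ab)=ab+c(ab)=(e+u)+(1-e)=1+u$, whence $1-(1+c)(ab)=-u\in U(R)$. Therefore $ba-f$ is a unit, and $ba=f+(ba-f)$ is the desired strongly clean decomposition with $f\in comm(ba)$. I expect the main obstacle to be locating the correct auxiliary element $c$: idempotents do not transfer naively from $ab$ to $ba$, so the whole argument hinges on choosing $c$ so that $f=1-bca$ is genuinely idempotent, commutes with $ba$, and—via Jacobson's Lemma—forces the unit condition on $ba-f$ to reduce to the manifestly invertible element $-u$.
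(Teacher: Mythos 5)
Your proposal is correct and is essentially the paper's own proof: your $c=u^{-1}(1-e)$ packages the paper's auxiliary element, your idempotent $f=1-bca$ is exactly the paper's $g=1-bu^{-1}(1-e)a$, and the unit condition is verified the same way, by reducing $1-b(1+c)a$ to $1-(1+c)ab=-u$ via Jacobson's Lemma. The only cosmetic difference is that you isolate the identities $c(ab)=(ab)c=1-e$ and $c(ab)c=c$ up front, which slightly streamlines the verification.
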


\begin{proof} Assume that $ab$ is strongly clean. Then there
exist $e^2=e\in comm(ab)$ and $u\in U(R)$ such that $ab=e+u$. Set
$f=bu^{-1}(1-e)a$, $g=1-f$ and $-v=g-ba=1-b[1+u^{-1}(1-e)]a$. This
implies that

$
\begin{array}{ll}
  f^2  & = bu^{-1}(1-e)abu^{-1}(1-e)a \\
       & = bu^{-1}(1-e)uu^{-1}(1-e)a \\
       & = bu^{-1}(1-e)a\\
       & = f\\
\end{array}
$

\noindent because $(1-e)ab=(1-e)u$, and so $g^2=g$. Moreover, we
have

$
\begin{array}{ll}
  1-ab[1+u^{-1}(1-e)]  & =1-ab-ab(1-e)u^{-1} \\
       & = 1-ab-(1-e)\\
       & = -u\in U(R).\\
\end{array}
$

\noindent In view of Lemma~\ref{Jacobson}, we see that $v\in
U(R)$. Since $fba=bu^{-1}(1-e)aba=b(1-e)a$ and
$baf=babu^{-1}(1-e)a=b(1-e)a$, we get $gv=vg$, also $ba=g+v$. This
gives that $ba$ is strongly clean. So the proof is completed.
\end{proof}



The following result is a direct consequence of
Theorem~\ref{strclean}.

\begin{cor}\label{cleanJacobson} Let $R$ be a ring and $a,b\in R$. If
$1-ab$ is strongly clean, then so is $1-ba$.
\end{cor}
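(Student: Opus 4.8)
The plan is to reduce the statement to Theorem~\ref{strclean} by exploiting the fact that strong cleanness is stable under the map $c \mapsto 1-c$. So the first step I would take is to record the elementary observation that an element $c \in R$ is strongly clean if and only if $1-c$ is strongly clean.

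To verify this observation, suppose $c = e + u$ with $e^2 = e$, $e \in comm(c)$, and $u \in U(R)$. Then $1 - c = (1-e) + (-u)$, where $1-e$ is idempotent and $-u$ is again a unit. Moreover, since $e$ commutes with $c$ it also commutes with $1-c$, so $1-e \in comm(1-c)$. Hence $1-c$ is strongly clean. The converse follows by applying the same reasoning to $1-c$ in place of $c$, since $1-(1-c) = c$.

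With this in hand the corollary is immediate. Applying the observation to $c = ab$, the hypothesis that $1-ab$ is strongly clean yields that $ab$ is strongly clean. Theorem~\ref{strclean} then gives that $ba$ is strongly clean, and applying the observation once more, now to $c = ba$, we conclude that $1-ba$ is strongly clean. I expect there to be essentially no obstacle here: the only point that needs checking is that an idempotent commuting with $c$ still commutes with $1-c$, which is trivial, so all of the actual content of the corollary is carried by Theorem~\ref{strclean} while the auxiliary observation merely translates between $ab$ and $1-ab$.
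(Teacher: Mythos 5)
Your proposal is correct and follows exactly the paper's route: translate the hypothesis via the equivalence ``$c$ strongly clean $\Leftrightarrow$ $1-c$ strongly clean,'' apply Theorem~\ref{strclean} to pass from $ab$ to $ba$, and translate back. The only difference is that the paper cites this equivalence as well known, whereas you verify it explicitly (and correctly) via the decomposition $1-c=(1-e)+(-u)$.
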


\begin{proof} It is well known that $1-x$ is strongly clean if and
only if $x$ is strongly clean. Assume that $1-ab$ is strongly
clean. Then $ab$ is strongly clean. By Theorem~\ref{strclean}, we
see that $ba$ is strongly clean. So $1-ba$ is strongly
clean.
\end{proof}

\begin{thm} \label{strpiregular} Let $R$ be a ring and $a,b\in R$. If
$ab$ is Drazin invertible, then so is $ba$ and
$(ba)^D=b\big((ab)^D\big)^2a$.
\end{thm}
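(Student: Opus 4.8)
The plan is to prove that if $ab$ is Drazin invertible then so is $ba$, and to establish Cline's formula $(ba)^D = b\big((ab)^D\big)^2 a$. Since Koliha showed that Drazin invertibility is equivalent to strong $\pi$-regularity, and Remark~\ref{new1} characterizes strong $\pi$-regular elements as sums $s+n$ with $s$ strongly regular, $n$ nilpotent and $sn=ns=0$, I can try to leverage the machinery already built. However, the cleanest route given the tools in the excerpt is the direct computation verifying that $y := b\big((ab)^D\big)^2 a$ satisfies the three defining conditions of a Drazin inverse of $ba$.

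Let me first set $x = (ab)^D$, so that $(ab)x^2 = x$, $x \in comm^2(ab)$, and $(ab)^2 x - ab \in Nil(R)$, say $\big((ab)^2 x - ab\big)^k = 0$. I define $y = bx^2 a$ and check the three axioms in order. For the first condition $(ba)y^2 = y$, I expand $(ba)(bx^2a)(bx^2a)$ and repeatedly use the identity $x(ab) = (ab)x$ together with $(ab)x^2 = x$ to collapse the product; the key algebraic move is that interior factors of the form $a(bx^2a)b$ or $(ab)x^2$ simplify via the Drazin relations. For the idempotent-power / nilpotency condition $(ba)^2 y - ba \in Nil(R)$, I would compute $(ba)^2 y - ba = b\big((ab)^2 x - ab\big)a$ after pushing the $a,b$ factors inward, and then observe that a power of $b(\cdot)a$ relates to a power of $(\cdot)$ sandwiched by $a,b$, so nilpotency of $(ab)^2 x - ab$ forces nilpotency of $(ba)^2 y - ba$.

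The step I expect to be the main obstacle is the double-commutant condition $y \in comm^2(ba)$, since the commutation hypotheses we are handed live at the level of $ab$, not $ba$, and transporting them across requires care. The standard trick is to show that if $z \in comm(ba)$, then $az b \in comm(ab)$ (or an analogous transported element commutes with $ab$), invoke $x \in comm^2(ab)$ to get a commutation with $x$, and then multiply back by $a$ and $b$ on the appropriate sides to conclude $zy = yz$. Managing exactly which products to form, and ensuring no spurious factors are introduced, is the delicate bookkeeping here.

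Alternatively, if the direct verification of the double-commutant property proves awkward, I would instead apply the additive decomposition approach: write $ab = s + n$ as in Remark~\ref{new1} with $s = (ab)(ab)^D(ab)$ strongly regular and $n$ nilpotent, transport this to a decomposition $ba = s' + n'$ via analogous sandwiching by $a$ and $b$, verify that $s'$ remains strongly regular, $n'$ nilpotent and $s'n' = n's' = 0$, and then invoke Lemma~\ref{lem1} to conclude $(ba)^D = (s')^{\sharp}$, finally matching this against the claimed formula $b\big((ab)^D\big)^2 a$ by a short computation.
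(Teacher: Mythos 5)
Your first and third axioms do go through essentially as you describe, though with one small slip: with $x=(ab)^D$ and $y=bx^2a$ one gets $(ba)^2y-ba=b\big((ab)x-1\big)a$, not $b\big((ab)^2x-ab\big)a$ (the two differ by an interior factor of $ab$); the nilpotency argument survives because $\big(((ab)x-1)(ab)\big)^k=\big((ab)^2x-ab\big)^k=0$. The genuine gap sits exactly where you predicted it: the double-commutant condition. The ``multiply back'' move you describe does not close. From $z(ba)=(ba)z$ you correctly obtain $azb\in comm(ab)$, hence $(azb)x^2=x^2(azb)$ since $x\in comm^2(ab)$; but multiplying this identity by $b$ on the left and $a$ on the right yields $(ba)(zy)=(yz)(ba)$, and the unwanted outer factors of $ba$ cannot be cancelled in a general ring, so you never reach $zy=yz$. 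An additional idea is needed — for instance, first proving that the strongly regular part $s=b\,c(ab)\,a$ lies in $comm^2(ba)$ by a shuffling computation of the kind the paper uses in its generalized Drazin theorem (which exploits the palindromic shape of $bcaba$ together with $abc=cab$), and only then passing to $y=s^{\sharp}\in comm^2(s)\subseteq comm^2(ba)$. Your sketch does not supply this.

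Your fallback plan is the right one and is, in substance, the paper's proof — and it is worth seeing why it dodges the obstacle above. Starting from Drazin's characterization ($abc=cab$, $(ab)^m=(ab)^{m+1}c$, $c=c^2ab$), the paper sets $s=bcaba$ and $n=ba-s$, checks that $s$ is strongly regular with $s^{\sharp}=bc^2a$, that $n^{m+1}=0$, and that $sn=ns=0$, and then invokes Remark~\ref{new1}. The decisive point is that the characterization of strong $\pi$-regularity in Remark~\ref{new1} carries \emph{no} double-commutant requirement at all, so the step that blocks your primary route simply never arises in the Drazin case; Lemma~\ref{lem1} then gives $(ba)^D=s^{\sharp}=bc^2a=b\big((ab)^D\big)^2a$. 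In short: pursue your alternative route, and if you insist on the direct verification, be aware that $y\in comm^2(ba)$ requires a real argument, not just transport-and-multiply.
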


\begin{proof} By \cite[Theorem 4]{D}, there exist $c\in R$ and $m\in \Bbb N$ such that $abc=cab$,
$(ab)^m=(ab)^{m+1}c$ and $c=c^2ab$. Set $s=bcaba$ and $n=ba-s$.
This implies that $sn=ns=0$ and $e=bca$ is an idempotent. We
observe that
$s^2(bc^2a)=(bcaba)(bcaba)(bc^2a)=bc^4(ab)^4a=bcaba=s$ because
$c=c^2ab$. It is easy to check that $s^{\sharp}=bc^2a$. Note that
$n^{m+1}=(ba-s)^{m+1}=(ba-eba)^{m+1}=(ba)^{m+1}-bca(ba)^{m+1}=b[(ab)^m-c(ab)^{m+1}]a=0$
since $(ab)^m=(ab)^{m+1}c$. Then $ba$ is strongly $\pi$-regular by
Remark~\ref{new1}. Hence $ba$ is Drazin invertible. By
Lemma~\ref{lem1}, $(ba)^D=b\big((ab)^D\big)^2a$.
\end{proof}

\begin{thm} \label{quasipolar} Let $R$ be a ring and $a,b\in R$. If
$ab$ is generalized Drazin inverse, then so is $ba$ and
$(ba)^{gD}=b\big((ab)^{gD}\big)^2a$.
\end{thm}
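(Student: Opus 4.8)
The plan is to mirror the structure of the proof of Theorem~\ref{strpiregular}, replacing the Drazin-invertibility decomposition with the generalized Drazin (quasipolar) decomposition supplied by Remark~\ref{new2}. Since $ab$ is generalized Drazin invertible, by \cite[Theorem 4.2]{KP} it is quasipolar, so by Remark~\ref{new2} we may write $ab=s+q$ where $s$ is strongly regular with $s\in comm^2(ab)$, $q\in R^{qnil}$, and $sq=qs=0$. The idea is to transport this decomposition across the product: starting from the spectral idempotent $p=1-ss^{\sharp}$ (equivalently the idempotent $p\in comm^2(ab)$ with $(ab)p=q$ and $ab+p\in U(R)$ coming from quasipolarity), I would build the corresponding pieces for $ba$.

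First I would set $y=(ab)^{gD}=s^{\sharp}$ (using Lemma~\ref{lem2}, which identifies the generalized Drazin inverse with the group inverse of the strongly regular part) and form the candidate $z=by^2a$, so that the asserted formula reads $(ba)^{gD}=bz\cdot$ — more precisely $(ba)^{gD}=b\big((ab)^{gD}\big)^2a$. Following the pattern of Theorem~\ref{strpiregular}, I would set $S=bsy a=bs(s^{\sharp})^2a\cdot(ab)$-type expression adapted to the group inverse, i.e. define the strongly regular part of $ba$ by transporting $s$ through $b,a$, and set $Q=ba-S$. The three routine checks are: (i) $S$ is strongly regular with group inverse $by^2a$, verified by the same telescoping computation $S^2(by^3a)=S$ used for $s^{\sharp}=bc^2a$ in the Drazin case; (ii) $SQ=QS=0$, which follows from $sq=qs=0$ together with the absorption identities for the idempotent $e=byabsomething$; and (iii) $Q\in R^{qnil}$, which is the genuinely new ingredient.

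The main obstacle will be step (iii): showing the remainder $Q$ is quasinilpotent rather than merely nilpotent. In the Drazin case one simply computed $n^{m+1}=0$ directly, but here $q$ is only quasinilpotent, so I cannot expect nilpotency of $Q$. The right tool is Jacobson's Lemma (Lemma~\ref{Jacobson}): quasinilpotence of $Q$ amounts to $1+Qx\in U(R)$ for all $x\in comm(Q)$, and since $Q$ arises from $q$ by the $ba\leftrightarrow ab$ transport, I would relate invertibility of $1+Qx$ to invertibility of an expression of the form $1+q(\text{something})$ via the $1+ba\mapsto 1+ab$ passage, using $q\in R^{qnil}$ to conclude. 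A cleaner route, which I would prefer, is to avoid recomputing quasinilpotence from scratch and instead invoke Remark~\ref{new2} in reverse: exhibit a strongly regular $S\in comm^2(ba)$ and a quasinilpotent $Q$ with $SQ=QS=0$ and $ba=S+Q$, which by Remark~\ref{new2} forces $ba$ to be quasipolar, hence generalized Drazin invertible by \cite[Theorem 4.2]{KP}.

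Finally, once generalized Drazin invertibility of $ba$ is established, the explicit formula $(ba)^{gD}=b\big((ab)^{gD}\big)^2a$ follows from Lemma~\ref{lem2} exactly as Lemma~\ref{lem1} was used at the end of Theorem~\ref{strpiregular}: the group inverse of the strongly regular part $S$ is precisely $by^2a=b\big((ab)^{gD}\big)^2a$, and Lemma~\ref{lem2} identifies $(ba)^{gD}$ with that group inverse. The one point requiring care in this last step is verifying $by^2a\in comm^2(ba)$, which should come from $s^{\sharp}=y\in comm^2(ab)$ together with the transport relations, so that the double-commutant requirement in the definition of the generalized Drazin inverse is met.
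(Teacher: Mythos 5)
Your overall strategy coincides with the paper's: transport the quasipolar decomposition of $ab$ to a decomposition of $ba$, check that the strongly regular part lies in $comm^2(ba)$, prove that the remainder is quasinilpotent using Jacobson's Lemma, and conclude with Remark~\ref{new2} and Lemma~\ref{lem2}. The paper realizes this with the concrete choices $c=(ab)^{gD}$, $s=bcaba$ (so $s^{\sharp}=bc^2a$ and $e=bca$ is the relevant idempotent) and $q=ba-s$, and your steps (i), (ii) and the final double-commutant verification are carried out essentially as you sketch.

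There is, however, a genuine gap at the step you yourself single out as the main obstacle, namely (iii) $Q\in R^{qnil}$. Your plan is to relate invertibility of $1+Qx$ to invertibility of an expression of the form $1+q(\cdot)$ via the $1+ba\mapsto 1+ab$ passage. This does not work as stated, because $Q=ba-S=(1-bca)ba=b(1-cab)a$ does \emph{not} factor as $b(\mbox{quasinilpotent})a$: applying Lemma~\ref{Jacobson} directly to $1+xQ=1+\bigl(xb(1-cab)\bigr)a$ only trades it for $1+axb(1-cab)$, whose invertibility you have no independent handle on. What does factor correctly is $Q^2$: a short computation using $c\in comm^2(ab)$ and $abc^2=c$ gives $Q^2=b\bigl(ab-(ab)^2c\bigr)a=-bta$ with $t=(ab)^2c-ab\in R^{qnil}$. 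The paper's device is then: for $x\in comm(Q)$, verify by a separate computation that $ax^2b\in comm(t)$, deduce $1-ax^2bt\in U(R)$ from $t\in R^{qnil}$, transfer this to $1-x^2bta\in U(R)$ by Lemma~\ref{Jacobson}, and finally observe $(1+xQ)(1-xQ)=1-x^2Q^2\in U(R)$, whence $1+xQ\in U(R)$. Without this squaring trick your step (iii) does not go through; you should replace the vague transport argument by this explicit computation (and likewise replace the placeholder expressions for $S$ and $e$ by the concrete ones above).
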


\begin{proof} Suppose $ab$ is generalized Drazin invertible. Then there exists $c\in comm^2(ab)$ such that
$abc^2=c$ and $(ab)^2c-ab\in R^{qnil}$. Set $t=(ab)^2c-ab$,
$s=bcaba$ and $q=ba-s$. As in the proof of
Theorem~\ref{strpiregular}, we see that $sq=qs=0$, $s$ is strongly
regular and $s^{\sharp}=bc^2a$. Let $xq=qx$ for some $x\in R$. We
observe that
$ax^2b\big(ab-(ab)^2c\big)=ax^2\big(ba-s\big)b=ax^2qb=aqx^2b=ax^2b\big(ab-(ab)^2c\big)$
and so $ax^2b\in comm(t)$. Since $t\in R^{qnil}$, we have
$1-ax^2bt\in U(R)$, and so $1-x^2bta\in U(R)$ by
Lemma~\ref{Jacobson}. Further, $(1+xq)(1-xq)=1-x^2q^2=1-x^2bta\in
U(R)$, and so $1+xq\in U(R)$. That is, $q\in R^{qnil}$. Now let
$yba=bay$ for some $y\in R$. It is easy to see that $ayb\in
comm(ab)$, and so $cayb=aybc$ because $c\in comm^2(ab)$.
Multiplying by $b$ on the left and by $a$ on the right yields
$sy=babcay=bcabay=baybca=ybabca=ys$. Hence $s\in comm^2(ba)$. By
Remark~\ref{new2}, $ba$ is quasipolar. Thus $ba$ is generalized
Drazin invertible. So we have $(ba)^{gD}=b\big((ab)^{gD}\big)^2a$
by Lemma~\ref{lem2}.
\end{proof}

\begin{thm} \label{pseudopolar} Let $R$ be a ring with $a,b\in R$. If
$ab$ is pseudo Drazin invertible, then so is $ba$ and
$(ba)^{pD}=b\big((ab)^{pD}\big)^2a$.
\end{thm}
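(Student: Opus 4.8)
The plan is to run the proof of Theorem~\ref{quasipolar} almost verbatim, replacing quasinilpotency of the ``remainder'' term by membership in $J^{\#}(R)$ and invoking Remark~\ref{new3} and Lemma~\ref{lem3} in place of Remark~\ref{new2} and Lemma~\ref{lem2}. Since $ab$ is pseudo Drazin invertible, there is $c=(ab)^{pD}\in comm^2(ab)$ with $abc^2=c$ and $t:=(ab)^2c-ab\in J^{\#}(R)$. As before I set $s=bcaba$ and $q=ba-s$.

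First I would carry over, unchanged, the purely algebraic facts established in the proof of Theorem~\ref{quasipolar}: namely $sq=qs=0$, $s$ is strongly regular with $s^{\sharp}=bc^2a$, and $s\in comm^2(ba)$. None of these arguments uses any size condition on $t$ beyond $c\in comm^2(ab)$, so they transfer directly and require no rework.

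The only genuinely new point --- and hence the crux of the proof --- is to show $q\in J^{\#}(R)$. Rather than the unit-based argument of Theorem~\ref{quasipolar}, I would compute powers of $q$ explicitly. Writing $q=b(1-cab)a$ one checks the two one-sided intertwining relations $aq=-ta$ and $qb=-bt$ (both consequences of $c\in comm^2(ab)$), together with the base identity $q^2=-bta$. A one-line induction using $qb=-bt$ then yields
\[
q^{n+1}=(-1)^n\,b\,t^{n}\,a \qquad (n\ge 1).
\]
Since $t\in J^{\#}(R)$, there is $k\in\Bbb N$ with $t^k\in J(R)$, and because $J(R)$ is a two-sided ideal, $q^{k+1}=(-1)^k b\,t^k a\in J(R)$; thus $q\in J^{\#}(R)$.

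With $ba=s+q$, $s$ strongly regular, $s\in comm^2(ba)$, $q\in J^{\#}(R)$ and $sq=qs=0$ in hand, Remark~\ref{new3} shows $ba$ is pseudopolar, so by the Wang--Chen characterization $ba$ is pseudo Drazin invertible; Lemma~\ref{lem3} then gives $(ba)^{pD}=s^{\sharp}=bc^2a=b\big((ab)^{pD}\big)^2a$. I expect the main obstacle to be nothing conceptual but rather getting the power formula $q^{n+1}=(-1)^n b t^n a$ exactly right, since it is precisely this identity that converts $t\in J^{\#}(R)$ into $q\in J^{\#}(R)$; everything else is inherited from the earlier theorems.
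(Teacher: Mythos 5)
Your proposal is correct and follows essentially the same route as the paper: the same decomposition $ba=s+q$ with $s=bcaba$, the same transfer of the algebraic facts from Theorem~\ref{quasipolar}, and the same key computation showing $q^{k+1}=\pm\,b\,t^{k}a\in J(R)$ (the paper derives it from the idempotent $1-bca$ commuting with $ba$, you derive it from $qb=-bt$; these are the same calculation), followed by Remark~\ref{new3} and Lemma~\ref{lem3}. Your version even records the sign $(-1)^{n}$ that the paper silently drops, which is harmless since $J(R)$ is an ideal.
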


\begin{proof} Assume $ab$ is pseudo Drazin invertible. Then there exists $c\in comm^2(ab)$ such that
$abc^2=c$ and $(ab)^2c-ab\in J^{\#}(R)$. Set $t=(ab)^2c-ab$,
$s=bcaba$ and $q=ba-s$. Similar to the proof of
Theorem~\ref{quasipolar}, we see that $sq=qs=0$, $s$ is strongly
regular, $s\in comm^2(ba)$ and $s^{\sharp}=bc^2a$. Now we show
that $q\in J^{\#}(R)$. Since $t\in J^{\#}(R)$, we have $t^n\in
J(R)$ for some $n\in \Bbb N$. We observe that
$q^{n+1}=[(1-bca)ba]^{n+1}=(1-bca)(ba)^{n+1}=bt^na\in J(R)$
because $1-bca$ is an idempotent. Hence, by Remark~\ref{new3},
$ba$ is pseudopolar. That is, $ba$ is pseudo Drazin invertible and
$(ba)^{pD}=b\big((ab)^{pD}\big)^2a$ by Lemma~\ref{lem3}.
\end{proof}


\section{Elements in Corner Rings}

 Let $a$, $e^2=e\in R$. In \cite{LM} showed
that $eae$ is unit regular in $eRe$ iff $eae+1-e$ is unit regular
in $R$. Then Chen \cite{Chen} proved that $eae$ is unit regular in
$eRe$ iff $ea+1-e$ is unit regular in $R$ iff $ae+1-e$ is unit
regular in $R$. Now we extend this result to some certain elements
in ring.

\begin{thm} Let $R$ be a ring with $a$, $e^2=e\in R$. Then the
following statements are equivalent.
\begin{enumerate}
    \item $eae$ is strongly clean in $eRe$.
    \item $ae + 1 - e$ is strongly clean in $R$.
    \item $ea + 1 - e$ is strongly clean in $R$.
\end{enumerate}
\end{thm}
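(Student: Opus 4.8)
The plan is to prove the three equivalences by exploiting the structural connection between a corner ring $eRe$ and the full ring $R$, together with Theorem~\ref{strclean} (strong cleanness transfers between $ab$ and $ba$). The key observation is that $ea+1-e$ and $ae+1-e$ are of the form ``$xy+1-e$'' and ``$yx+1-e$'' in a way that lets me convert between them, while $eae$ lives inside the corner ring. I would establish the cycle $(1)\Rightarrow(2)\Rightarrow(3)\Rightarrow(1)$, or perhaps prove $(2)\Leftrightarrow(3)$ directly and then $(1)\Leftrightarrow(2)$.

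\emph{For $(2)\Leftrightarrow(3)$}, I would factor the relevant elements to invoke the $ab$-versus-$ba$ principle. Writing $a_1 = ea$ and noting $ae+1-e$ and $ea+1-e$ differ by which side the idempotent sits on, the natural move is to observe that $ae+1-e = (a)(e)+1-e$ while $ea+1-e=(e)(a)+1-e$ in spirit; more precisely I would find elements $x,y$ with $xy = ae$ and $yx = ea$ (or handle the $+1-e$ shift carefully). Here one must be cautious because Theorem~\ref{strclean} is about $ab$ and $ba$, not about $ab+c$ and $ba+c$, so the translation by $1-e$ needs separate justification. The clean way is to use Corollary~\ref{cleanJacobson} or a direct idempotent-conjugation argument: strong cleanness of $1 - a(e \cdot \text{stuff})$ type expressions.

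\emph{For $(1)\Leftrightarrow(2)$}, the heart of the matter is relating strong cleanness in the corner ring $eRe$ (where the identity is $e$) to strong cleanness of a lifted element in $R$ (where the identity is $1$). The standard device is the decomposition $R \cong \begin{pmatrix} eRe & eR(1-e) \\ (1-e)Re & (1-e)R(1-e)\end{pmatrix}$, under which $eae+1-e$ becomes block-diagonal $\mathrm{diag}(eae, 1-e)$, and $1-e$ is a unit of the corner $(1-e)R(1-e)$. An element is strongly clean in $R$ iff each diagonal block is strongly clean in its corner, so $eae+1-e$ is strongly clean in $R$ iff $eae$ is strongly clean in $eRe$. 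Then to pass from $eae+1-e$ to $ae+1-e$, I would write $ae+1-e = (ae)(e) + (1-e)$ and relate it to $eae+1-e$ via the $ab/ba$ trick again, since $e\cdot(ae) = eae$ and $(ae)\cdot e = ae\cdot e$, controlling the $1-e$ summand throughout.

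\emph{The main obstacle} I anticipate is bookkeeping the additive shift by $1-e$ when applying the multiplicative $ab \leftrightarrow ba$ result: Theorem~\ref{strclean} does not directly see the $+1-e$ term, so I must either absorb it into an idempotent/unit decomposition by hand or reprove a ``shifted Cline'' statement. Concretely, the delicate step is verifying that the idempotent and unit witnessing strong cleanness of one expression can be explicitly transported to witness it for the next, checking the commutation $e'u'=u'e'$ survives the transport; this is exactly the kind of explicit formula (analogous to $f=bu^{-1}(1-e)a$ in Theorem~\ref{strclean}) that I would construct and verify by direct computation rather than invoke abstractly.
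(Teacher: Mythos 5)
Your overall strategy --- route everything through Corollary~\ref{cleanJacobson} for the $ae \leftrightarrow eae \leftrightarrow ea$ transitions and a corner-ring argument for the equivalence with $eRe$ --- is essentially the paper's, but the two steps you flag as ``to be handled carefully'' are precisely where all the content lies, and as written they are still open. First, the additive shift by $1-e$ is not something to be absorbed by a ``shifted Cline'' statement or an idempotent-transport computation: the whole point is the elementary identity
$$ae+1-e=1-(1-ae)e,\qquad eae+1-e=1-e(1-ae)=1-(1-ea)e,\qquad ea+1-e=1-e(1-ea),$$
which puts all three elements literally in the form $1-xy$ versus $1-yx$, so Corollary~\ref{cleanJacobson} applies verbatim (twice) and yields $(2)\Leftrightarrow(3)$, as well as the passage between $ae+1-e$ and $eae+1-e$, with no further work. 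Your proposal never produces these factorizations, and without them your plan for $(2)\Leftrightarrow(3)$ is not yet a proof.

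Second, in $(1)\Leftrightarrow(2)$ your Peirce-decomposition claim that ``an element is strongly clean in $R$ iff each diagonal block is strongly clean in its corner'' is fine in the easy direction (blocks strongly clean $\Rightarrow$ sum strongly clean, which gives a clean route to $(1)\Rightarrow(2)$), but the direction you actually need for $(2)\Rightarrow(1)$ is the nontrivial one: the idempotent $p$ witnessing strong cleanness of $eae+1-e$ in $R$ commutes with $eae+1-e$ but has no reason to commute with $e$, so it need not be block-diagonal, and you cannot simply read off a strongly clean decomposition of $eae$ inside $eRe$. This descent to corner rings is a genuine theorem, not bookkeeping; the paper invokes \cite[Theorem 1.2.13]{Di} (for $x\in eRe$, $x$ is strongly clean in $eRe$ iff it is strongly clean in $R$), applied to the corner element $e-eae$ after using the ``$x$ is strongly clean iff $1-x$ is strongly clean'' trick in both $R$ and $eRe$ to convert $eae+1-e$ into $e-eae$. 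You would need either to cite that result or to prove it; asserting the block-diagonal equivalence in both directions hides exactly this difficulty.
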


\begin{proof} $(1)\Rightarrow (2)$ Suppose $eae$ is strongly
clean. Then $e-eae$ is also strongly clean in $eRe$. By
\cite[Theorem 1.2.13]{Di}, we have $e-eae$ is strongly clean in
$R$. This gives $1-e+eae=1-e(1-ae)$ is strongly clean. By
Corollary~\ref{cleanJacobson}, we get $1-(1-ae)e=1-e+ae$ is
strongly clean in $R$.

$(2)\Rightarrow (1)$ Assume $ae + 1 - e=1-(1-ae)e$ is strongly
clean. This gives $1-e(1-ae)=1-e+eae$ is also strongly clean by
Corollary~\ref{cleanJacobson}. Then $1-(1-e+eae)=e-eae$ is
strongly clean. In view of \cite[Theorem 1.2.13]{Di}, we have
$e-eae$ is strongly clean in $eRe$. It follows that
$e-(e-eae)=eae$ is strongly clean. So holds $(1)$.

$(2)\Rightarrow (3)$ If $ae + 1 - e=1-(1-ae)e$ is strongly clean,
then $1-e(1-ae)=1-e+eae=1-(1-ea)e$ is strongly clean by
Corollary~\ref{cleanJacobson}. This gives that $1-e(1-ea)=1-e+ea$
is strongly clean again by Corollary~\ref{cleanJacobson}.

$(3)\Rightarrow (2)$ is symmetric.
\end{proof}

By \cite[Theorem 3.6]{CMP} (see also \cite{LP, ZCC}), if $1-xy$ is
strongly $\pi$-regular, then so is $1-yx$. By using this property,
we obtain the following result.

\begin{thm}\label{sregular} Let $R$ be a ring with $a$, $e^2=e\in R$. Then the
following statements are equivalent.
\begin{enumerate}
    \item $eae$ is strongly $\pi$-regular in $eRe$.
    \item $ae + 1 - e$ is strongly $\pi$-regular in $R$.
    \item $ea + 1 - e$ is strongly $\pi$-regular in $R$.
\end{enumerate}
\end{thm}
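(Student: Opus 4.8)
The plan is to follow the same template as the strongly clean case treated in the previous theorem, but to replace its two engines by their strongly $\pi$-regular counterparts. In the strongly clean proof the two tools were the corner-ring transfer \cite[Theorem 1.2.13]{Di} and the Jacobson-type Corollary~\ref{cleanJacobson}; here I would instead use (i) a transfer lemma asserting that $eae+(1-e)$ is strongly $\pi$-regular in $R$ if and only if $eae$ is strongly $\pi$-regular in the corner $eRe$, and (ii) the quoted consequence of \cite[Theorem 3.6]{CMP}, namely that $1-xy$ is strongly $\pi$-regular if and only if $1-yx$ is (the converse direction coming from applying the one-directional statement with the roles of $x$ and $y$ interchanged). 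The element that bridges all three conditions is $1-e+eae=eae+(1-e)$.

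First I would establish the transfer lemma. The key point is that $eae$ and $1-e$ are orthogonal, $(eae)(1-e)=0=(1-e)(eae)$, so with $x:=eae+(1-e)$ one has $x^n=(eae)^n+(1-e)$ for every $n$. Using the definition $x^n\in x^{n+1}R\cap Rx^{n+1}$ directly, I would multiply the relation $x^n=x^{n+1}r$ on the left and right by $e$; since $ex^{n+1}=(eae)^{n+1}$ and $(eae)^n=ex^ne$, this produces $(eae)^n=(eae)^{n+1}(ere)$ with $ere\in eRe$, and symmetrically on the other side, so $eae$ is strongly $\pi$-regular in $eRe$. Conversely, given $(eae)^n=(eae)^{n+1}s$ with $s\in eRe$, a short computation using orthogonality and $(1-e)s=0$ shows $x^{n+1}(s+1-e)=(eae)^n+(1-e)=x^n$, and likewise from the left, so $x$ is strongly $\pi$-regular in $R$. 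Alternatively one could read both directions off the decomposition in Remark~\ref{new1}, since a group inverse of $s\in eRe$ computed in $eRe$ is also a group inverse in $R$.

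Then the rest is purely formal. Writing $1-e+eae=1-e(1-ae)$ and $ae+1-e=1-(1-ae)e$ exhibits these two as $1-yx$ and $1-xy$ for $x=1-ae$, $y=e$, so (ii) gives $(1)\Leftrightarrow(2)$ through the bridge element. Similarly $1-e+eae=1-(1-ea)e$ and $ea+1-e=1-e(1-ea)$ are $1-xy$ and $1-yx$ for $x=1-ea$, $y=e$, which yields $(1)\Leftrightarrow(3)$; combining the two gives $(2)\Leftrightarrow(3)$, and in fact $(2)\Leftrightarrow(3)$ is symmetric exactly as in the strongly clean theorem.

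The main obstacle is the transfer lemma of the second paragraph, and the one genuine subtlety is that, unlike the strongly clean setting, I cannot pass between $e-eae$ and $1-e+eae$ via an ``$x$ is strongly $\pi$-regular if and only if $1-x$ is'' principle, because that equivalence fails for strong $\pi$-regularity. This is precisely why the argument must route the corner through the orthogonal sum $eae+(1-e)$ and invoke the cited result in its $1-xy$/$1-yx$ form rather than reproving an analogue of Theorem~\ref{strclean}.
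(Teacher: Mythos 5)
Your proposal is correct and follows essentially the same route as the paper: both bridge all three conditions through the element $eae+1-e=1-e(1-ae)=1-(1-ea)e$, invoke the cited strongly $\pi$-regular Jacobson-type result from \cite{CMP} to pass between $1-xy$ and $1-yx$, and reduce everything to a corner-transfer statement relating $eae+1-e$ in $R$ to $eae$ in $eRe$. The only divergence is in that transfer step: you verify it directly from the definition $x^n\in x^{n+1}R\cap Rx^{n+1}$ using the orthogonality $x^n=(eae)^n+1-e$, whereas the paper proves one direction via the decomposition of Remark~\ref{new1} (lifting a group inverse $y$ of $s$ in $eRe$ to $y+1-e$ in $R$) and the other by extracting $ebe$ from a Drazin inverse $b$ of $eae+1-e$ --- your version is slightly more elementary, but the overall architecture is identical, and your closing observation that one cannot route through $e-eae$ as in the strongly clean case is exactly why the paper's proof also goes through $eae+1-e$ directly.
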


\begin{proof}
$(1)\Rightarrow(2)$ Assume $eae=s+n$ where $s\in eRe$ is strongly
regular, $n\in Nil(eRe)$ and $sn=ns=0$. Then there exists $y\in
eRe$ such that $y\in comm^2(s)$ and $s=s^2y$. By using this, we
get $(s+1-e)(s+1-e)(y+1-e)=(s^2+1-e)(y+1-e)=s^2y+1-e=s+1-e$. So
$s+1-e$ is strongly regular in $R$ and $(s+1-e)n=n(s+1-e)=0$.
Hence, by Remark~\ref{new1}, $eae+1-e=s+1-e+n$ is strongly
$\pi$-regular in $R$. Since $eae + 1 - e=1-e(1-ae)$, we have $ae +
1 - e$ is strongly $\pi$-regular in $R$.

$(2)\Rightarrow(1)$ Suppose $1-e+ae$ is strongly $\pi$-regular in
$R$. As $1-e+ae=1-(1-ae)e$, we have $\alpha=eae + 1 - e$ is
strongly $\pi$-regular in $R$. Then there exists $b\in R$ such
that $\alpha b^2 = b$, $b\in comm^2(\alpha)$ and
$\alpha^2b-\alpha\in Nil(R)$. Since $\alpha^2b-\alpha\in Nil(R)$,
there exists $n\in \Bbb N$ such that
$(\alpha^2b-\alpha)^n=\alpha^{n+1}b-\alpha^{n}=0$, and so
$\alpha^n=\alpha^{n+1}b$. Note that $\alpha^{n}=(eae)^n+1-e$,
$\alpha^{n+1}=(eae)^{n+1}+1-e$ and $eb=be$ because $b\in
comm^2(\alpha)$ and $e\alpha=\alpha e$. We get
$(eae)^n=(eae)^{n+1}b+b(1-e)-(1-e)$. Multiplying by $e$, we see
that $(eae)^n=(eae)^{n+1}ebe$. It can be verified that $ebe\in
comm(eae)$. Thus $eae$ is strongly $\pi$-regular in $eRe$.

$(1)\Leftrightarrow(3)$ is symmetric.
\end{proof}

By \cite[Theorem 2.3]{ZCC}, if $1-xy$ is quasipolar, then so is
$1-yx$. Then we have the following result.

\begin{thm}\label{qpolar} Let $R$ be a ring with $a$, $e^2=e\in R$.
Consider the following statements.
\begin{enumerate}
    \item $ae + 1 - e$ is quasipolar in $R$.
    \item $ea + 1 - e$ is quasipolar in $R$.
    \item $eae$ is quasipolar in $eRe$.
\end{enumerate}

\noindent Then we have $(1)\Leftrightarrow(2)\Rightarrow(3)$. If
$e$ is central, then $(3)\Rightarrow(1)$.
\end{thm}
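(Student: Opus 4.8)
The plan is to use $\beta:=eae+1-e$ as a pivot and reduce the horizontal equivalence to the quasipolar Jacobson lemma \cite[Theorem 2.3]{ZCC} together with the Koliha--Patricio identification of quasipolar with generalized Drazin invertible. First I would record the idempotent identities $ae+1-e=1-(1-ae)e$, $eae+1-e=1-e(1-ae)=1-(1-ea)e$, and $ea+1-e=1-e(1-ea)$. With $x=1-ae$, $y=e$ the first two read $1-xy$ and $1-yx$, so \cite[Theorem 2.3]{ZCC}, being symmetric in $x,y$, gives that $ae+1-e$ is quasipolar iff $\beta$ is quasipolar; with $x=1-ea$, $y=e$ the same theorem gives that $ea+1-e$ is quasipolar iff $\beta$ is quasipolar. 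Chaining these proves $(1)\Leftrightarrow(2)$ and shows that both are equivalent to the quasipolarity of $\beta$ in $R$.

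For $(1)\Rightarrow(3)$ I would push the generalized Drazin inverse into the corner. By the previous step $(1)$ makes $\beta=eae+1-e$ quasipolar, hence generalized Drazin invertible; put $b=\beta^{gD}\in comm^2(\beta)$, so $\beta b^2=b$ and $t:=\beta^2b-\beta\in R^{qnil}$. Since $e\beta=\beta e=eae$ we have $e\in comm(\beta)$ and therefore $eb=be$; set $c:=ebe=eb=be\in eRe$. Using $eae=\beta e=e\beta$ and $eb=be$, a short computation gives $(eae)c^2=(\beta b^2)e=be=c$ and $(eae)^2c-eae=(\beta^2b-\beta)e=te$. For the double-commutant condition, if $y\in eRe$ commutes with $eae$ then $y\beta=\beta y$ (the $1-e$ terms vanish because $ey=ye=y$), whence $yb=by$ and so $yc=yb=by=cy$; thus $c\in comm^2_{eRe}(eae)$.

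The step I expect to be the crux is promoting $te\in R^{qnil}$ to $te\in(eRe)^{qnil}$, i.e.\ from quasinilpotence in $R$ to quasinilpotence in the corner. I would argue by the Peirce decomposition relative to $e$: for $z\in eRe$ with $z(te)=(te)z$, the element $w:=(te)z$ lies in $eRe$, and $1+w\in U(R)$ because $te\in R^{qnil}$ and $z\in comm(te)$. One checks $(1-e)(e+w)=(e+w)(1-e)=0$, so $1+w$ is the orthogonal sum $(1-e)+(e+w)$, block diagonal with respect to $e$ and $1-e$; by the standard fact that such a block-diagonal element is a unit iff its blocks are units in the respective corners, $e+w\in U(eRe)$. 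This is exactly $te\in(eRe)^{qnil}$. Combining with the preceding paragraph, $c=(eae)^{gD}$, so $eae$ is generalized Drazin invertible, equivalently quasipolar, in $eRe$, giving $(3)$.

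Finally, when $e$ is central I would invoke the product decomposition: centrality forces $eR(1-e)=(1-e)Re=0$, hence $R\cong eRe\times(1-e)R(1-e)$, under which $ae+1-e$ corresponds to $(eae,\,1-e)$ (the mixed and $(1-e)$-parts of $ae$ drop out). The coordinate $1-e$ is the identity of $(1-e)R(1-e)$, hence quasipolar there, while $eae$ is quasipolar in $eRe$ by $(3)$; since quasipolarity is preserved and reflected by finite direct products, $(eae,1-e)$, that is $ae+1-e$, is quasipolar, giving $(1)$. I would note that centrality is genuinely needed here: the reverse transfer $(eRe)^{qnil}\to R^{qnil}$ fails in general, because an element of $R$ commuting with a corner element need not lie in the corner, which is exactly why the statement asserts $(3)\Rightarrow(1)$ only in the central case.
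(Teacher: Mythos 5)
Your proof is correct, and while the horizontal equivalence $(1)\Leftrightarrow(2)$ goes exactly as in the paper (two applications of the quasipolar Jacobson lemma of \cite{ZCC} through the pivot $\beta=eae+1-e$), your treatment of the two corner-passage implications is genuinely different. For $(1)\Rightarrow(3)$ the paper invokes the additive characterization of quasipolarity (Remark~\ref{new2}), writes $\beta=s+q$ with $s$ strongly regular, $s\in comm^2(\beta)$, $q\in R^{qnil}$ and $sq=qs=0$, and corners each summand, checking separately that $ese$ is strongly regular in $eRe$, that $ese\in comm^2(eae)$, and that $eqe\in(eRe)^{qnil}$; you instead transport the generalized Drazin inverse itself, setting $c=ebe$ for $b=\beta^{gD}$ and verifying the three defining identities of $(eae)^{gD}$ inside $eRe$, which is arguably leaner since only one object has to be cornered. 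Both arguments meet the same crux --- promoting quasinilpotence from $R$ to $eRe$ --- and resolve it the same way: an element of $eRe$ commuting with the cornered element already commutes with the ambient quasinilpotent, and a unit of the block-diagonal form $(1-e)+(e+w)$ with $w\in eRe$ corners to a unit of $eRe$. One small imprecision: you justify $1+w\in U(R)$ ``because $te\in R^{qnil}$,'' but you never prove that $te\in R^{qnil}$ and you do not need it; for $z\in eRe$ one has $z(te)=(te)z$ if and only if $zt=tz$, and $(te)z=tz$, so $1+w\in U(R)$ follows directly from $t\in R^{qnil}$. Finally, for $(3)\Rightarrow(1)$ with $e$ central, your direct-product argument via $R\cong eRe\times(1-e)R(1-e)$ is a cleaner packaging of the paper's hands-on verification (which checks $q+1-e\in U(R)$ and $p\in comm^2(eae+1-e)$ by direct computation) and lets you avoid citing \cite[Lemma 3.5]{YC} for $(eRe)^{qnil}=eRe\cap R^{qnil}$.
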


\begin{proof} $(1)\Rightarrow(3)$ Since $ae + 1 - e=1-(1-ae)e$ is
quasipolar, we have $1-e(1-ae)=1-e+eae$ is quasipolar. Then
$eae+1-e=s+q$ where $s\in R$ is strongly regular, $s\in
comm^2(eae+1-e)$, $q\in R^{qnil}$ and $sq=qs=0$. Since $s$ is
strongly regular, there exists $y\in comm^2(s)$ such that
$s=s^2y$. Write $eae=ese+eqe$. Hence $es=se$, $ye=ey$ and $eq=qe$
because $s\in comm^2(eae+1-e)$, $sq=qs$ and $y\in comm^2(s)$. By
using this, we see that $(ese)(eqe)=esqe=0=(eqe)(ese)$ and
$(ese)(ese)(eye)=eseseye=es^2ye=ese$; that is, $ese$ is strongly
regular in $eRe$.

\textbf{Claim 1.} $eqe\in (eRe)^{qnil}$.

\emph{Proof.} Let $(exe)(eqe)=(eqe)(exe)$ for some $x\in R$. Now
we show that $e+exeqe\in U(eRe)$. Then $exeq=qexe$ because
$eq=qe$. Since $q\in R^{qnil}$, we have $1+exeq\in U(R)$. Set
$v=(1+exeq)^{-1}$. We observe that
$(e+exeqe)(eve)=(e+exeq)(ve)=e(1+exeq)ve=e$. Similarly, we show
that $(eve)(e+exeq)=e$. Therefore $eqe\in (eRe)^{qnil}$, as
desired.

\textbf{Claim 2.} $ese\in comm^2(eae)$.

\emph{Proof.} Let $(exe)(eae)=(eae)(exe)$ for some $x\in R$. We
show that $(exe)(ese)=(ese)(exe)$. Since $s\in comm^2(eae+1-e)$
and $exe\in comm(eae+1-e)$, we have $exes=sexe$. It follows that
$(exe)(ese)=(ese)(exe)$. That is, $ese\in comm^2(eae)$.

\noindent Hence $eae\in eRe$ is quasipolar by Remark~\ref{new2}.

$(3)\Rightarrow(1)$ Suppose $eae+p=q\in U(eRe)$ where $p^2=p\in
comm^2(eae)$ and $eaep\in (eRe)^{qnil}$. Write $eae+1-e+p=q+1-e$.
Since $q\in U(eRe)$, $qv=vq=e$ for some $v\in eRe$. Then
$(q+1-e)(v+1-e)=qv+1-e=1$ and so $q+1-e\in U(R)$. Further, we have
$(eae+1-e)p=eaep\in R^{qnil}$ because $(eRe)^{qnil}=eRe\cap
R^{qnil}$ by \cite[Lemma 3.5]{YC}. As $e$ is central, it is easy
to see that $p\in comm^2(eae+1-e)$. Since $eae + 1 - e=1-e(1-ae)$,
we have $ae + 1 - e$ is quasipolar in $R$.

$(1)\Rightarrow (2)$ If $ae + 1 - e=1-(1-ae)e$ is quasipolar, then
$1-e(1-ae)=1-e+eae=1-(1-ea)e$ is quasipolar. This gives that
$1-e(1-ea)=1-e+ea$ is quasipolar.

$(2)\Rightarrow (1)$ is symmetric.
\end{proof}

Following \cite{ZCC} and \cite{LP}, we prove the following result.

\begin{thm} \label{jacpseudo} Suppose  $\alpha:= 1-ab\in R$ is pseudo Drazin invertible with index $k$, pseudo Drazin inverse $\alpha^{pD}$, and
strongly spectral idempotent $e := 1-\alpha^{pD}\alpha$, and let
$u:=1-\alpha e$. Then $\beta:= 1-ba$ is also pseudo Drazin
invertible with index $k$, and has pseudo Drazin inverse

$$ \beta^{pD}=(1-beu^{-1}a) + b\alpha^{pD}a.$$

\noindent  The strongly spectral idempotent for $\beta$ is
$beu^{-1}a$.
\end{thm}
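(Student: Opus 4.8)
The plan is to show that $\beta=1-ba$ is pseudopolar with spectral idempotent $f:=beu^{-1}a$, and then to read off $\beta^{pD}$ from the identity $x^{pD}=(x+p)^{-1}(1-p)$, which holds for any pseudopolar $x$ with spectral idempotent $p$ (indeed $(x+p)x^{pD}=xx^{pD}+px^{pD}=1-p$, since $px^{pD}=x^{pD}-x(x^{pD})^2=0$). Write $w:=\alpha^{pD}$ and $h:=eu^{-1}$. Since $e,w\in comm^2(\alpha)$ and $ab=1-\alpha\in comm(\alpha)$, all of $e,w,u,u^{-1}$ commute with $ab$; moreover $q:=\alpha e=e\alpha$ satisfies $q^{k}\in J(R)$ (this is exactly what index $k$ means) and $u=1-q$, so $u\in U(R)$. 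I would first record three identities that follow from $ab=1-\alpha$, $\alpha e=q$, $eq=qe=q$ and $e-q=ue$: namely $h(ab)=(ab)h=e$; $\alpha w=w\alpha=1-e$ with $we=ew=0$; and $(w-h)(ab)=(ab)(w-h)=w-1$.

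Four verifications are then routine. First, $f^2=f$: one computes $eu^{-1}(ab)e=e$, whence $f^2=be\,[u^{-1}(ab)e]\,u^{-1}a=beu^{-1}a=f$. Second, $f\in comm(\beta)$: from $h(ab)=(ab)h=e$ one gets $f(ba)=bha\cdot ba=b(hab)a=bea$ and $(ba)f=b(abh)a=bea$, so $f(ba)=(ba)f$. Third, $\beta+f\in U(R)$: writing $\beta+f=1-b(1-eu^{-1})a$ and applying Jacobson's Lemma~\ref{Jacobson}, invertibility of $\beta+f$ is equivalent to that of $1-ab(1-eu^{-1})=\alpha+(ab)h=\alpha+e$, and $\alpha+e\in U(R)$ because $\alpha$ is pseudopolar with spectral idempotent $e$. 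Fourth, $\beta f=bqu^{-1}a$, and an induction using that $q,u,ab$ pairwise commute gives $(\beta f)^{n}=bq^{n}u^{-n}(ab)^{n-1}a$; hence $(\beta f)^{k}\in J(R)$, so $\beta f\in J^{\#}(R)\subseteq R^{qnil}$.

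The genuine obstacle is upgrading $f\in comm(\beta)$ to $f\in comm^2(\beta)$, which is required before one may call $\beta$ pseudopolar. A direct transfer argument (if $y\in comm(ba)$ then $ayb\in comm(\alpha)$, so $h$ commutes with $ayb$) only yields $a[f,y]=0$, and since $[f,y]\in comm(ba)$ this gives $(ba)[f,y]=0$; the spurious factor $ba$ resists cancellation, so a self-contained computation stalls here. Instead I would route through quasipolarity: $\alpha$ is pseudopolar, hence quasipolar, so by the quasipolar Jacobson result of \cite{ZCC} quoted above $\beta$ is quasipolar. The spectral idempotent of a quasipolar element is the unique idempotent commuting with $\beta$ that satisfies $\beta+(\,\cdot\,)\in U(R)$ and $\beta(\,\cdot\,)\in R^{qnil}$; by the four verifications $f$ is such an idempotent, so $f$ is precisely that spectral idempotent and therefore lies in $comm^2(\beta)$. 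Since in addition $\beta f\in J^{\#}(R)$, the element $\beta$ is pseudopolar with spectral idempotent $f$, hence pseudo Drazin invertible by \cite{WC3}. The formula $(\beta f)^{n}=bq^{n}u^{-n}(ab)^{n-1}a$ gives index at most $k$, and applying the construction to the pair $(b,a)$ bounds the index of $1-ab$ by that of $1-ba$; as the former is $k$, the index of $\beta$ equals $k$. I expect this identification of $f$ with the spectral idempotent to be the main difficulty.

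Finally I would compute $\beta^{pD}=(\beta+f)^{-1}(1-f)$ and match it with the asserted formula by verifying the equivalent identity $(\beta+f)\big[(1-f)+bwa\big]=1-f$. Here $(\beta+f)(1-f)=\beta-\beta f$; moreover $\beta\,bwa=b(1-ab)wa=b(\alpha w)a=b(1-e)a=ba-bea$, while $f\,bwa=beu^{-1}(ab)wa=0$ because $(ab)w=w-1+e$ and $eu^{-1}(w-1+e)=0$ (using $ew=0$ and $eu^{-1}e=eu^{-1}$). Collecting terms, the left-hand side becomes $(1-ba)-\beta f+(ba-bea)=1-bqu^{-1}a-bea=1-b(qu^{-1}+e)a=1-beu^{-1}a=1-f$, where the last step uses $eu^{-1}=e+qu^{-1}$. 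Hence $\beta^{pD}=(1-beu^{-1}a)+b\alpha^{pD}a$, and since $1-\beta^{pD}\beta=f=beu^{-1}a$, the strongly spectral idempotent of $\beta$ is $beu^{-1}a$, which completes the proof.
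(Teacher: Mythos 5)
Your argument is correct in outline and all of your computations check out ($f^2=f$, $f\beta=\beta f$, $\beta+f\in U(R)$ via Lemma~\ref{Jacobson}, $(\beta f)^{k}=bq^{k}u^{-k}(ab)^{k-1}a\in J(R)$, and the final evaluation of $(\beta+f)^{-1}(1-f)$), but you diverge from the paper at exactly the step you flag as the main difficulty, and there the paper's direct argument does \emph{not} stall. The trick is to compute $fxf$ twice rather than attack $fx-xf$ head on: if $x\beta=\beta x$ then $axb\in comm(\alpha)$, so $axb$ commutes with $e$ and $u^{-1}$, whence $fxf=beu^{-1}(axb)eu^{-1}a=b(axb)eu^{-1}eu^{-1}a=xb(abeu^{-1})eu^{-1}a=xf$ (using $bax=xba$ and $abeu^{-1}=e$), and symmetrically $fxf=beu^{-1}(eu^{-1}ab)ax=fx$; hence $fx=xf$ and $f\in comm^{2}(\beta)$ in four lines, with no external input.

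Your detour through quasipolarity is a genuinely different route and it does work, but the load-bearing claim --- that the spectral idempotent of a quasipolar element is the unique idempotent merely \emph{commuting} with $\beta$ such that $\beta+(\cdot)\in U(R)$ and $\beta(\cdot)\in R^{qnil}$ --- is asserted without proof or citation, and the uniqueness statement usually recorded is only for idempotents in $comm^{2}(\beta)$. The stronger version is true, and you should supply the argument: let $p\in comm^{2}(\beta)$ be the spectral idempotent and $f$ your idempotent, so $pf=fp$; setting $c:=(\beta+f)^{-1}(1-f)$ one has $\beta c=1-f$ and $c$ commutes with $\beta$, $f$ and $p$, so $p(1-f)=(\beta p)c$ is an idempotent of the form (quasinilpotent)$\times$(commuting element), forcing $1-p(1-f)\in U(R)$ and hence $p(1-f)=0$; symmetrically $(1-p)f=(\beta f)\beta^{gD}=0$, so $f=p\in comm^{2}(\beta)$. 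With that lemma in place your proof is complete and reproduces the paper's formula. What the paper's route buys is self-containedness (no appeal to \cite{ZCC} or to a uniqueness lemma); what yours buys is a conceptual explanation of why $f$ is forced to double-commute once the softer conditions are checked.
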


\begin{proof} We know that $e\in comm^2(\alpha)$,
$\alpha^ke\in J(R)$ and $\alpha+e\in U(R)$. Then $u\in U(R)$ and
write $f:=beu^{-1}a$. Similar to the proof of \cite[Lemma
2.3]{LP}, we see that $\beta^kf\in J(R)$. Moreover, it is easy to
check that $abeu^{-1}=e$, and so we have $fb=be$ and $ea=af$. So
$f^2=f$. Note that $\beta+f=1-b(1-eu^{-1})a\in U(R)$ if an only if
$\alpha+e=1-(1-eu^{-1})ab\in U(R)$ by Lemma~\ref{Jacobson}. Then
$\beta+f\in U(R)$ because $\alpha+e\in U(R)$. To prove $f\in
comm^2(\beta)$, let $x\beta=\beta x$. We observe that $xba=bax$
and $axb\in comm(\alpha)$, and so $axbe=eaxb$ and
$axbu^{-1}=u^{-1}axb$ since $e\in comm^2(\alpha)$. Hence we have
$fxf=beu^{-1}axbeu^{-1}a=baxbeu^{-1}eu^{-1}a=xbabeu^{-1}eu^{-1}a=xf$
and
$fxf=beu^{-1}axbeu^{-1}a=beu^{-1}eu^{-1}axba=beu^{-1}eu^{-1}abax=fx$.
That is, $xf=fx$. Thus $f^2=f\in comm^2(\beta)$. As is well known,
$\beta^{pD}=(\beta+f)^{-1}(1-f)$. By Lemma~\ref{Jacobson}, it is
easy to see that $(\beta+f)^{-1}=1-b(\alpha+e)^{-1}(eu^{-1}-1)a$.
Therefore,

$
\begin{array}{ll}
  \beta^{pD}  & =(1-b(\alpha+e)^{-1}(eu^{-1}-1)a)(1-f) \\
       & = 1-f-b(\alpha+e)^{-1}(eu^{-1}-1)(a-af) \\
       & = 1-f-b(\alpha+e)^{-1}(eu^{-1}-1)(a-ea)\\
       & = 1-f-b(\alpha+e)^{-1}(eu^{-1}-1)(1-e)a\\
       & = 1-f+b(\alpha+e)^{-1}(1-e)a\\
       & = 1-f+b\alpha^{pD}a.\\
\end{array}
$

\end{proof}

Let $e\in R$ be an idempotent and $a\in R$. Wang and Chen
\cite{WC3} showed that if $eae+1-e$ is pseudopolar in $R$, then
$eae$ is pseudopolar in $eRe$. We extend this result as follows.

\begin{thm}\label{ppolar} Let $R$ be a ring with $a$, $e^2=e\in R$.
Consider the following statements.
\begin{enumerate}
    \item $ae + 1 - e$ is pseudopolar in $R$.
    \item $ea + 1 - e$ is pseudopolar in $R$.
    \item $eae$ is pseudopolar in $eRe$.
\end{enumerate}

\noindent Then we have $(1)\Leftrightarrow(2)\Rightarrow(3)$. If
$e$ is central, then $(3)\Rightarrow(1)$.
\end{thm}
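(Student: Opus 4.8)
The plan is to mirror the structure already used for the quasipolar case in Theorem~\ref{qpolar}, replacing the quasinilpotent condition $R^{qnil}$ throughout by the condition $J^{\#}(R)$ and using Remark~\ref{new3} in place of Remark~\ref{new2}. The logical skeleton will be the three-step cycle $(1)\Rightarrow(3)$, $(3)\Rightarrow(1)$ under the centrality hypothesis, and the easy equivalence $(1)\Leftrightarrow(2)$.

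For $(1)\Rightarrow(3)$, I would start from $ae+1-e=1-(1-ae)e$ being pseudopolar and invoke the pseudo Drazin analogue of Jacobson's lemma. Here I can either cite Theorem~\ref{pseudopolar} (applied to the factorization $1-xy$ with $x=1-ae$, $y=e$) or appeal directly to Theorem~\ref{jacpseudo}, to conclude that $1-e(1-ae)=1-e+eae$ is pseudopolar. By Remark~\ref{new3} I then write $eae+1-e=s+q$ with $s$ strongly regular, $s\in comm^2(eae+1-e)$, $q\in J^{\#}(R)$, and $sq=qs=0$. Exactly as in Theorem~\ref{qpolar}, I would pick $y\in comm^2(s)$ with $s=s^2y$, set $ese$, $eqe$ as the candidate decomposition of $eae$ in the corner ring, and verify $es=se$, $eq=qe$, $ey=ye$ from the commutation relations. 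The strongly regular part $ese$ and the orthogonality $(ese)(eqe)=0=(eqe)(ese)$ carry over verbatim. The only genuinely new ingredient is \textbf{Claim 1}: that $eqe\in (J^{\#}(eRe))$. Since $q\in J^{\#}(R)$ means $q^n\in J(R)$ for some $n$, I would show $(eqe)^n=eq^ne$ (using $eq=qe$) lies in $J(eRe)=eRe\cap J(R)$, which is the standard description of the Jacobson radical of a corner ring. The commutant claim \textbf{Claim 2}, that $ese\in comm^2(eae)$, is identical to the quasipolar proof. Then Remark~\ref{new3} delivers pseudopolarity of $eae$ in $eRe$.

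For $(3)\Rightarrow(1)$ under the assumption that $e$ is central, I would follow the quasipolar template closely: given $eae+p=q\in U(eRe)$ with $p^2=p\in comm^2(eae)$ and $eaep\in J^{\#}(eRe)$, I lift to $eae+1-e+p=q+1-e$, note that $q+1-e\in U(R)$ via the inverse $v+1-e$, and use centrality of $e$ to get $p\in comm^2(eae+1-e)$. The substitute for the step ``$(eRe)^{qnil}=eRe\cap R^{qnil}$'' from \cite[Lemma 3.5]{YC} is the analogous identity $J^{\#}(eRe)=eRe\cap J^{\#}(R)$, which again reduces to $J(eRe)=eRe\cap J(R)$; this yields $(eae+1-e)p=eaep\in J^{\#}(R)$. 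Finally $eae+1-e=1-e(1-ae)$ together with Theorem~\ref{pseudopolar} (or Theorem~\ref{jacpseudo}) transports pseudopolarity back to $ae+1-e=1-(1-ae)e$.

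The equivalence $(1)\Leftrightarrow(2)$ is the formal bookkeeping already seen in the earlier theorems: rewriting $ae+1-e=1-(1-ae)e$ and $ea+1-e=1-(1-ea)e$ and shuttling between the two forms $1-e(1-ae)=1-e+eae=1-e+eae=1-(1-ea)e$ via the pseudo Drazin Jacobson lemma. I expect the main obstacle to be justifying the corner-ring radical identity $J(eRe)=eRe\cap J(R)$ and hence $J^{\#}(eRe)=eRe\cap J^{\#}(R)$ cleanly enough to replace the cited quasinilpotent fact; everything else is a faithful transcription of the quasipolar argument with $J^{\#}$ in place of $R^{qnil}$. I would state that radical identity as a short preliminary observation (it is standard: $J(eRe)=eRe\cap J(R)=eJ(R)e$) and then let the rest of the proof run parallel to Theorem~\ref{qpolar}.
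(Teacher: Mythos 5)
Your proposal tracks the paper's proof essentially step for step: Theorem~\ref{jacpseudo} supplies the Jacobson-lemma transfer between $1-(1-ae)e$ and $1-e(1-ae)$, the decomposition $eae=ese+eqe$ and Claims 1 and 2 are carried over from Theorem~\ref{qpolar} with $(eqe)^n=eq^ne\in eJ(R)e=J(eRe)$ replacing the quasinilpotent claim, and the central case lifts $eaep$ via $\big((eae+1-e)p\big)^n=(eae)^np\in J(R)$, exactly as the paper does. The one caveat is your suggested alternative of citing Theorem~\ref{pseudopolar} (Cline's formula, relating $xy$ to $yx$) in place of Theorem~\ref{jacpseudo} (Jacobson's lemma, relating $1-xy$ to $1-yx$): that substitution does not work, since pseudopolarity of $z$ and of $1-z$ are not interchangeable, so you must use Theorem~\ref{jacpseudo} as the paper does.
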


\begin{proof} $(1)\Rightarrow(3)$ Since $ae + 1 - e=1-(1-ae)e$ is
pseudopolar, we have $1-e(1-ae)=1-e+eae$ is pseudopolar by
Theorem~\ref{jacpseudo}. Then $eae+1-e=s+q$ where $s\in R$ is
strongly regular, $s\in comm^2(eae+1-e)$, $q\in J^{\#}(R)$ and
$sq=qs=0$. Write $eae=ese+eqe$. Similar to the proof of
Theorem~\ref{qpolar}, we show that $ese$ is strongly regular in
$eRe$, $ese\in comm^2(eae)$ and $(ese)(eqe)=(eqe)(ese)=0$. As
$q\in J^{\#}(R)$, we have $q^n\in J(R)$ for some $n\in \Bbb N$. It
is easy to check that $(eqe)^n=eq^ne\in eJ(R)e=J(eRe)$ because
$eq=qe$. So $eqe\in J^{\#}(eRe)$. By Remark~\ref{new3}, $eae$ is
pseudopolar in $eRe$.

$(3)\Rightarrow(1)$ Assume $eae+p=q\in U(eRe)$ where $p^2=p\in
comm^2(eae)$ and $eaep\in J^{\#}(eRe)$. Then there exists $n\in
\Bbb N$ such that $(eaep)^n=(eae)^np\in J(R)$. Write
$eae+1-e+p=q+1-e$. As in the proof of Theorem~\ref{qpolar}, we get
$q+1-e\in U(R)$ and $p\in comm^2(eae+1-e)$. Further, we have
$\big((eae+1-e)p\big)^n=(eae+1-e)^np=\big((eae)^n+1-e\big)p=(eae)^np\in
J(R)$. That is, $(eae+1-e)p\in J^{\#}(R)$. Since $eae + 1 -
e=1-e(1-ae)$, we have $ae + 1 - e$ is pseudopolar in $R$.

$(1)\Rightarrow (2)$ If $ae + 1 - e=1-(1-ae)e$ is pseudopolar,
then $1-e(1-ae)=1-e+eae=1-(1-ea)e$ is pseudopolar. Hence we have
$1-e(1-ea)=1-e+ea$ is pseudopolar.

$(2)\Rightarrow (1)$ is symmetric.
\end{proof}

\end{document}